\documentclass[12pt]{article}
\usepackage{amsmath,amssymb,amsfonts,amsthm}
\usepackage[margin=1in]{geometry}
\usepackage{verbatim}
  
\usepackage{graphicx}
\usepackage{subcaption}

\theoremstyle{plain}

\newtheorem{theorem}{Theorem}[section]
\newtheorem{proposition}[theorem]{Proposition}
\newtheorem{corollary}[theorem]{Corollary}

\usepackage{chngcntr}
\counterwithin{table}{section}

\theoremstyle{remark}

\newtheorem*{remark}{Remark}

\title{Irregular triangulations of complete graphs on $12s$ vertices in orientable surfaces}
\author{Timothy Sun\\Columbia University}
\date{}

\newcommand{\Z}{\mathbb{Z}}

\begin{document}

\maketitle

\begin{abstract}
We present a family of index 1 abelian current graphs whose derived embeddings can be modified into triangulations of $K_{12s}$ for $s \geq 4$. Our construction is significantly simpler than previous methods for finding genus embeddings of $K_{12s}$, which utilized either large index or nonabelian groups.
\end{abstract}

\section{Introduction}

Showing that the complete graph $K_n$ can be embedded in the surface of genus $$\left\lceil \frac{(n-3)(n-4)}{12}\right\rceil$$
proves the Map Color Theorem~\cite{Ringel-MapColor}, which characterizes the chromatic number of all surfaces of positive genus. ``Case 0,'' which consists of all graphs $K_n$ where $n \equiv 0 \pmod{12}$, is traditionally treated separately from the remaining residues due to its resolution using \emph{nonabelian} current graphs. Namely, Terry~\emph{et al.}~\cite{Terry-Case0} employed the representation theory of finite fields to label a current graph with a nonabelian group of order $n = 12s$. 

Since irreducible polynomials over finite fields are not explicitly known in general, their approach may be considered nonconstructive. To remedy this, Pengelley and Jungerman~\cite{Pengelley-Index4} announced a solution using index 4 current graphs with the cyclic group $\Z_{12s}$, but gave details only for $s=1$ and $s \equiv 0 \pmod{8}$. Korzhik~\cite{Korzhik-Index4} improved on their method, giving four families of current graphs, one for each residue $s \bmod{4}$, for $s=4$ and $s \geq 6$. Unfortunately, the aformentioned current graphs in this high index regime are even more complicated than those of Terry \emph{et al.}~\cite{Terry-Case0}, despite the much simpler group involved. 

Mahnke~\cite{Mahnke-Case0} proved that ``there really is no index 1 solution for all $n \equiv 0 \pmod{12}$ with an Abelian group.'' The purpose of this paper is to give such a solution.

\section{Graph embeddings with current graphs}

For more information on topological graph theory, especially a formal treatment of current graphs, see Gross and Tucker~\cite{GrossTucker}. Proofs of correctness for the combinatorial techniques described here can be found in Ringel~\cite{Ringel-MapColor}. We consider only embeddings in orientable surfaces---the analogous statement for the nonorientable genus of $K_{12s}$ is swiftly handled in \S8.2 of Ringel~\cite{Ringel-MapColor}. 

A consequence of Euler's formula is that the genus of a simple graph $G$ is at least
$$\frac{|E(G)|-3|V(G)|+6}{6},$$
and this value is attained by a triangulation of $G$, i.e., an embedding where every face is triangular. Our main result is thus a new construction for triangulations of $K_{12s}$, which are embeddings in the surface of genus $(4s-1)(3s-1)$. The above expression also implies, roughly speaking, that a handle corresponds to six edges in a triangulation. This relationship is crucial for so-called ``additional adjacency'' solutions in the proof of the Map Color Theorem, where a triangulation of a nearly complete graph is augmented using handles to obtain a minimum genus embedding of a complete graph. Ringel \emph{et al.} were successful in finding non-triangular embeddings in this manner, but in the present paper we apply an additional adjacency step that results in a triangulation of a complete graph. 

A cellular embedding of a graph on a surface can be described combinatorially by a \emph{rotation system}, where each vertex $v$ is assigned a \emph{rotation}, a cyclic permutation of the edge ends incident with $v$. In the case of simple graphs, we only need to list the neighbors of $v$. The faces of the embedding can be traced out from the rotation system, but here, only the following specialization for triangulations is necessary:

\begin{proposition}
An embedding is a triangulation if and only if the corresponding rotation system satisfies \emph{Rule R*}, that is, for all edges $(i,k)$, if the rotation at $i$ is of the form
$$i. \,\, \dotsc j \, k \, l \dotsc,$$
then the rotation at $k$ is of the form
$$k. \,\, \dotsc l \, i \, j \dotsc.$$
\label{prop-ruler}
\end{proposition}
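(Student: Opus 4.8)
The plan is to argue through the face-tracing algorithm, which recovers the faces of an embedding directly from its rotation system. Recall that each face is an orbit of the permutation on directed edges (darts) that sends a dart $(i,k)$ to the dart $(k,m)$, where $m$ is the neighbor of $k$ immediately preceding $i$ in the rotation at $k$; I fix this orientation convention at the outset, the ``successor'' convention working symmetrically. Since a face of length $d$ corresponds to a dart-orbit of size $d$, an embedding is a triangulation precisely when every such orbit has size $3$. I would therefore prove the proposition by showing that this orbit condition is equivalent, edge by edge, to Rule R*.

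For the direction assuming Rule R*, I would take an arbitrary dart $(i,k)$ and write the rotation at $i$ as $i.\ \dotsc j\, k\, l \dotsc$. Rule R* forces the rotation at $k$ to read $k.\ \dotsc l\, i\, j \dotsc$, so the face-tracing permutation sends $(i,k) \mapsto (k,l)$. Applying Rule R* to the edge $(i,l)$ to recover the rotation at $l$, and then reading the rotation at $i$ directly, I would compute the next two darts and verify that the orbit closes as
$$(i,k) \to (k,l) \to (l,i) \to (i,k),$$
a triangle. As the darts partition into these length-$3$ orbits, every face is triangular and the embedding is a triangulation.

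For the converse, I would start from a triangulation and fix an edge $(i,k)$ with rotation at $i$ equal to $i.\ \dotsc j\, k\, l \dotsc$. The edge $(i,k)$ bounds exactly two faces, corresponding to the two angular sectors at $i$ between the edges $(i,j),(i,k)$ and $(i,k),(i,l)$; because the embedding is a triangulation these two faces are triangles. Tracing the face on the dart $(i,k)$ identifies its third vertex and pins the position of $i$ in the rotation at $k$ on one side, while the neighboring triangle pins it on the other side. Reading off both constraints yields exactly $k.\ \dotsc l\, i\, j \dotsc$, the conclusion of Rule R* for that edge; since the edge was arbitrary, Rule R* holds throughout.

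The main obstacle I anticipate is bookkeeping rather than conceptual: one must fix the orientation convention consistently so that the ``preceding'' and ``following'' neighbors land in the order $l\, i\, j$ claimed in the statement rather than its reverse, and then confirm that successive applications of Rule R* genuinely trace out a closed face. The crux is checking that the local three-term condition at each edge is exactly what makes the global face-tracing orbits close after three steps; once the convention is pinned down, both implications reduce to the short orbit computation sketched above.
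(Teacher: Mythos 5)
Your proposal is correct, but note that the paper never proves this proposition at all: it is quoted as a known fact, with correctness deferred to Ringel's monograph, and the face-tracing argument you give is precisely the standard proof found there. Your orbit computations check out. In the forward direction, with the predecessor convention, Rule R* applied at the edge $(i,k)$ gives $(i,k)\to(k,l)$, Rule R* applied at the edge $(i,l)$ gives $(k,l)\to(l,i)$, and reading the rotation at $i$ directly gives $(l,i)\to(i,k)$, so every dart orbit closes in three steps. In the converse, tracing the triangle on the dart $(i,k)$ pins the rotation at $k$ to contain $\dotsc l\,i \dotsc$, while tracing the triangle on the reverse dart $(k,i)$ pins it to contain $\dotsc i\,j \dotsc$, and together these give $\dotsc l\,i\,j \dotsc$, which is Rule R*. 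The convention issue you flag is the only genuine pitfall, and you dispose of it correctly: under the successor convention the same computation traces each triangle in the opposite direction, so both implications survive either choice.
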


A \emph{current graph} is a directed graph embedded in a surface, whose arcs are labeled using \emph{currents}, elements from a group $\Gamma$. The \emph{index} of the current graph is the number of faces in its embedding. For the remainder of the paper, except in Appendix~\ref{sec-app}, we only consider index 1 current graphs and take $\Gamma$ to be cyclic groups of the form $\Z_{12s-4}$. Let the \emph{excess} of a vertex be the sum of its incoming currents minus the sum of the outgoing currents. The name of our combinatorial tool comes from the fact that a vertex typically satisfies \emph{Kirchhoff's current law} (KCL), i.e. its excess is zero.\footnote{If $\Gamma$ were nonabelian, as in the case of Terry \emph{et al.}~\cite{Terry-Case0}, there is the additional complication where the order of summation for KCL is determined the rotation at that vertex.}

There is no actual contradiction with the result of Mahnke~\cite{Mahnke-Case0}. Her proof, and all aforementioned constructions, deal with embeddings that are regular coverings of current graphs, i.e. ones where the derived graph is exactly $K_{12s}$. Our departure from previous approaches is that we start with an embedding of $K_{12s}-K_4$ and add the six missing edges using one handle, breaking some of the symmetry along the way. Triangulations of $K_{12s}-K_4$ were known to Jungerman and Ringel~\cite{JungermanRingel-Minimal}, but for our purposes, we need the associated current graphs to have a specific structure. 

These current graphs, unlike those found in previous work on Case 0~\cite{Terry-Case0, Pengelley-Index4, Korzhik-Index4,Mahnke-Case0}, have \emph{vortices}, vertices where KCL is not satisfied. For each vortex, we label each incident face corner with a letter. We only examine current graphs that satisfy the following ``construction principles," which we paraphrase from Jungerman and Ringel~\cite{JungermanRingel-Minimal}:

\begin{enumerate}
\item[(C1)] Every vertex has degree 3, except vortices, which have degree 1 or 2.
\item[(C2)] The embedding of $D$ has one face, i.e. the current graph is of index 1.
\item[(C3)] For every element $\gamma \in \Gamma\setminus\{0\}$ not of order 2, exactly one of $\gamma$ or $-\gamma$ appears as a current. 
\item[(C4)] KCL holds at every vertex of degree 3. 
\item[(C5)] The order 2 element of $\Gamma$ appears on an arc incident with a vertex of degree 1. 
\item[(C6)] The excess of a vortex of degree $1$ generates $\Gamma$. 
\item[(C7)] The currents incident with a vortex of degree $2$ are both odd, and the excess of that vortex generates the index 2 subgroup of $\Gamma$, i.e., the subgroup of even numbers. 
\end{enumerate}

The one face boundary can be expressed as a cyclic sequence that alternates between arcs and face corners. Its \emph{log} is obtained by replacing arcs with their currents, and corners with their vortex letters, if any. If an arc labeled $\gamma$ is traversed in the the same direction as its orientation, the arc is replaced by $\gamma$; if it is traversed in the reverse direction, the arc is replaced by $-\gamma$. By (C5), the order 2 element appears twice consecutively in the log, which by convention, we condense into one instance. The standard interpretation is that the derived graph actually has a ``doubled 1-factor'' which is then supressed.

The log of a current graph describes a symmetric embedding of a certain derived graph, whose vertices are the elements of $\Gamma$ and the vortex letters. For example, the log of the current graph in Figure~\ref{fig-case3} is
$$\arraycolsep=4.5pt
\begin{array}{rrrrrrrrrrrrrrrrrrrrrrrrrrrrrrrrrrrrrrrr}
3 & x & 1 & y & 31 & z & 29 & 24 & 20 & 2 & 21 & 25 & 15 & 6 & 16 & 22 & 7 & 28 & \dots \\ 8 & 5 & 23 & 17 & 10 & 26 & 9 & 14 & 12 & 4 & 11 & 13 & w & 19 & 30 & 18 & 27
\end{array}$$

\begin{figure}[!ht]
    \centering
    \includegraphics[scale=0.9]{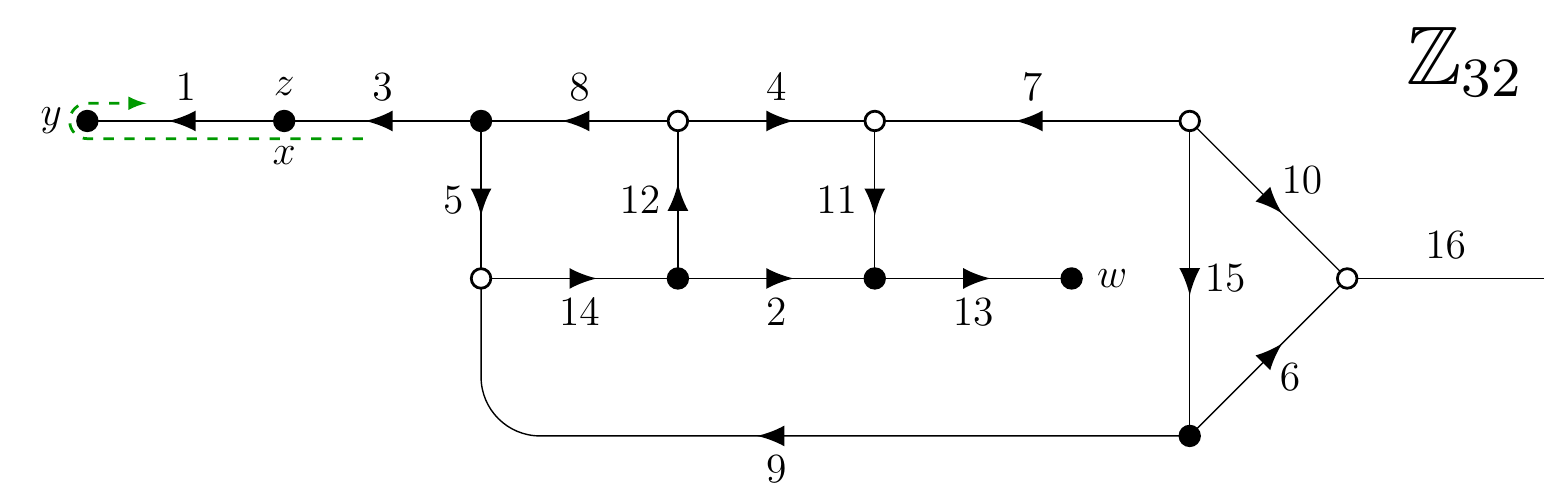}
    \caption{A current graph whose derived graph is $K_{36}-K_4$. Solid and hollow vertices represent clockwise and counterclockwise rotations, respectively.}
    \label{fig-case3}
\end{figure}

The log describes the rotation at vertex 0 (i.e. the identity element of $\Gamma$) and furthermore  determines the rest of the rotation system. We obtain the rotation at vertex $k \in \Gamma$ by adding $k$ to each of the elements of $\Gamma$ in the log. For the vortices of degree 1, we leave their letters unchanged, but for vortices of degree 2, that is, the one corresponding to the letters $x$ and $z$ in this example, we switch the letters' positions if $k$ is odd. A partial picture of the rotation system would therefore look like
$$\begin{array}{crrrrrrrrrrrrrrrrrrrrrrrrrrrrrrr}
0. & 3 & x & 1 & y & 31 & z & 29 & 24 & 20 & \cdots \\
1. & 4 & z & 2 & y & 0 & x & 30 & 25 & 21 \\
2. & 5 & x & 3 & y & 1 & z & 31 & 26 & 22 \\
3. & 6 & z & 4 & y & 2 & x & 0 & 27 & 23\\
\vdots & & & & & & & & & & \ddots
\end{array}$$
The rotations at the lettered vertices are ``manufactured'' such that the embedding near these vertices is triangular, which we do with the help of Proposition~\ref{prop-ruler}. For example, the rotation at vertex $x$ is of the following form:
$$\begin{array}{crrrrrrrrrrrrrrrrrrrrrrrrrrrrrrr}
x. & \dots & 31 & 30 & 1 & 0 & 3 & 2 & 5 & 4 & \dots
\end{array}$$
Because the current graph satisfies the construction principles, the entire embedding is triangular, and the rotations at the lettered vertices form proper cyclic permutations. This is a triangulation of $K_{36}-K_{4}$, as the only missing adjacencies are between lettered vertices. We will show in the next section how we can add the remaining edges using one handle to get a triangulation of $K_{36}$ as desired. 

\section{The genus of $K_{12s}$}

We now present our family of index 1 abelian current graphs for $K_{12s}-K_4$, $s \geq 4$. These yield, with a few small special cases, the simplest known proof of Case 0 of the Map Color Theorem.

\begin{theorem}
There exists an orientable triangulation of $K_{12s}$ for all $s \geq 1$.  
\end{theorem}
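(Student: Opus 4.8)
The plan is to separate the generic range $s \geq 4$, handled by a single parametrized family of current graphs, from the finitely many small cases $s \le 3$, which I would dispatch individually by exhibiting explicit triangulations (for $s=3$ the current graph of Figure~\ref{fig-case3} already supplies one, and minimum genus embeddings of $K_{12}$ and $K_{24}$ are available from the nonabelian solution of Terry \emph{et al.}~\cite{Terry-Case0}). The heart of the argument is therefore the uniform construction for $s \geq 4$.

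First I would write down explicitly, as a function of $s$, the index 1 current graph over $\Gamma = \Z_{12s-4}$ of the type illustrated in Figure~\ref{fig-case3}: a long trivalent ``ladder'' carrying most of the currents, capped by a fixed-size gadget housing the two degree-1 vortices ($w$ and $y$) and the single degree-2 vortex (corners $x$ and $z$). The ladder currents would be assigned by an arithmetic pattern in $s$, and I expect the definition may branch into a small number of subcases according to a small residue of $s$ (as in Korzhik's four families~\cite{Korzhik-Index4}), chosen so that the construction principles hold uniformly within each subcase; a goal throughout would be to keep the number of subcases as low as possible. I would then verify (C1)--(C7): that every non-vortex vertex is trivalent and obeys KCL (C1, C4); that the ladder and end gadget close up into a single face, giving index 1 (C2); that the assigned currents realize each $\pm$-pair of $\Gamma \setminus \{0\}$ not of order 2 exactly once and place the order-2 element $6s-2$ on the prescribed degree-1 arc (C3, C5); and that the vortex excesses generate $\Gamma$ and its even subgroup respectively (C6, C7). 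By the machinery of \S2, these principles force the derived rotation system to satisfy Rule R* (Proposition~\ref{prop-ruler}) at every vertex, including the four manufactured lettered vertices, so the derived embedding is a triangulation of $K_{12s}-K_4$ whose only non-adjacencies are the six pairs among $w,x,y,z$.

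The decisive step is the additional adjacency: inserting the six missing edges $wx,wy,wz,xy,xz,yz$ by means of a single handle while preserving triangularity. An Euler-characteristic count shows there is no slack, since one handle must account for exactly six new edges and four new faces with no new vertices ($\Delta V - \Delta E + \Delta F = -6 + 4 = -2$), so the surgery has to be performed in precisely the right place. I would carry it out as a local modification of the rotation system: locate, within the derived triangulation, a bounded configuration of faces incident to $w,x,y,z$ whose existence is forced by the ``specific structure'' of the chosen current graph, excise it, and re-route the four vertices across a handle so that the four new triangles $wxy,wxz,wyz,xyz$ appear. Correctness of this reattachment is then certified purely combinatorially by re-checking Rule R* at the handful of vertices whose rotations were altered. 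Making this local picture uniform in $s$ is exactly why the current graph must be engineered with its vortices in a controlled mutual position, and I expect this interplay---designing the family so that the \emph{same} constant-size handle surgery succeeds for every $s \geq 4$---to be the main obstacle, considerably more delicate than the otherwise routine verification of KCL and the single-face condition.

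Finally I would record that the resulting triangulation of $K_{12s}$ lies in the surface of genus $(4s-1)(3s-1)$, since a simple graph's triangulation attains the Euler-formula lower bound and $(12s-3)(12s-4)/12 = 12s^2 - 7s + 1 = (4s-1)(3s-1)$; combined with the small cases, this proves the theorem for all $s \geq 1$.
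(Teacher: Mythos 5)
Your strategy coincides with the paper's own: isolate $s \geq 4$, build an index 1 current graph over $\Z_{12s-4}$ whose derived embedding is a triangulation of $K_{12s}-K_4$ with lettered vertices $w,x,y,z$, and then install the six missing edges with a single handle plus edge flips (your Euler count $0 - 6 + 4 = -2$ correctly identifies that the surgery has no slack). The small cases differ harmlessly: citing Terry \emph{et al.}~\cite{Terry-Case0} for $s = 1,2$ does prove existence, whereas the paper gives index 4 abelian solutions in its appendix to remain within abelian groups. One imprecision: for $s = 3$, Figure~\ref{fig-case3} supplies only a triangulation of $K_{36}-K_4$, not of $K_{36}$; the same handle surgery is still required there (with one quadrilateral mirrored), so that case is not an off-the-shelf explicit triangulation.

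The genuine gap is that the core object is never produced. You say you \emph{would} assign ladder currents by an arithmetic pattern, \emph{expect} a few residue subcases, \emph{would} verify (C1)--(C7), and \emph{would} locate a bounded configuration of faces ``whose existence is forced by the specific structure of the chosen current graph.'' But for $s \geq 4$ the entire content of the theorem is the exhibition of such a family and of that configuration: one must specify the currents (the paper's Figure~\ref{fig-current-gen}, a fixed gadget housing the vortices plus a ladder whose rungs form an arithmetic sequence), check KCL and the single-face condition, and---crucially---verify that the log places $x$, $y$, $z$, $w$ and the currents $1$, $12s{-}5$, $2$, $6$ in the precise relative positions recorded in (\textasteriskcentered), since it is exactly this positional information that lets the fixed-size surgery of Figure~\ref{fig-handle} (trade $(0,1)$ for $(x,y)$, merge three faces with a handle, finish with edge flips) succeed uniformly in $s$. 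None of this can be taken on faith: Mahnke's theorem~\cite{Mahnke-Case0} shows the naive index 1 abelian approach is impossible, and the paper's own remark notes that the natural extension of its family to $s=3$ has no one-face embedding at all, so the verifications you defer as routine can genuinely fail. You have correctly reconstructed where the difficulty lies, but the proposal postpones that difficulty rather than resolving it, so it does not yet constitute a proof.
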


\begin{proof}

\begin{figure}[!ht]
    \begin{subfigure}[b]{\textwidth}
        \centering
        \includegraphics[width=\textwidth]{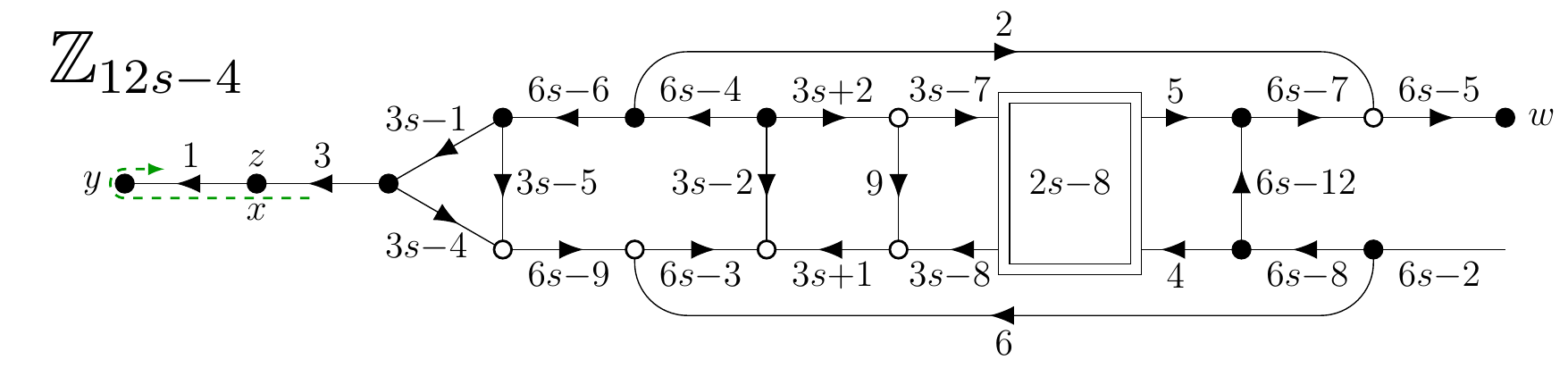}
        \caption{}
        \label{subfig-gen-a}
    \end{subfigure}
    \begin{subfigure}[b]{\textwidth}
        \centering
        \includegraphics[scale=0.85]{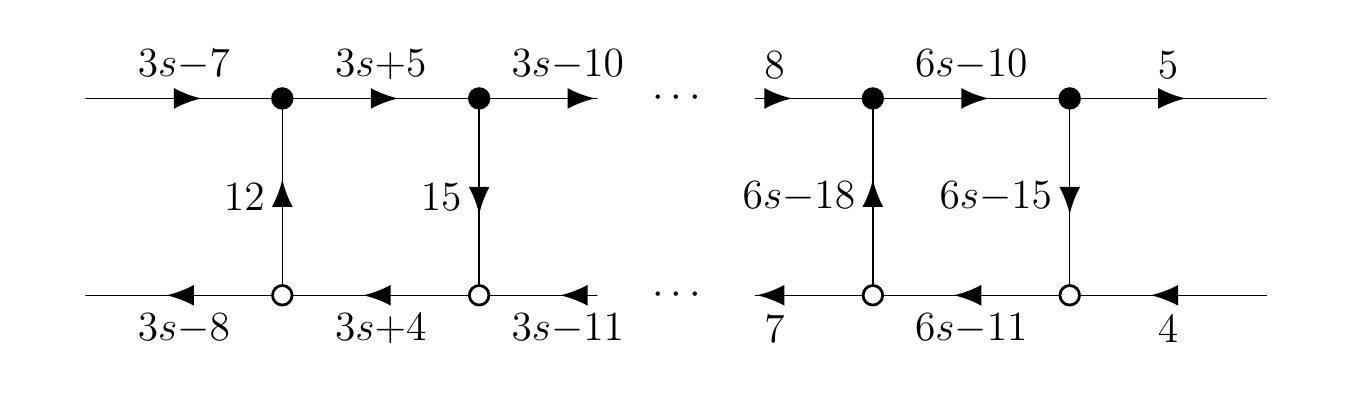}
        \caption{}
        \label{subfig-gen-b}
        \end{subfigure}
    \caption{The fixed part of the current graph (a) contains the salient currents for adding one handle, and the simple ladder (b) inside the box varies in length depending on $s$. The ``rungs'' alternate in direction and form an arithmetic sequence.}
    \label{fig-current-gen}
\end{figure}

For $s \geq 4$, consider the current graph in Figure~\ref{fig-current-gen}, which uses the cyclic group $\Z_{12s-4}$. By examining its log near the vortices and the curved arcs in part (a), we find that the rotation at vertex $0$ is of the form
\begin{equation*}
\setlength{\arraycolsep}{4pt}
\begin{array}{rcccccccccccccccccccc}
0. & x & 1 & y & 12s{-}5 & z & \dots & 6s{+}2 & 2 & 6s{+}3 & \dots & 6s{-}2 & 6 & 6s{-}3 & \dots & 6s{-}5 & w & \dots 
\end{array}
\tag{\textasteriskcentered}
\end{equation*}

\begin{figure}[!ht]
    \centering
    \includegraphics[scale=0.9]{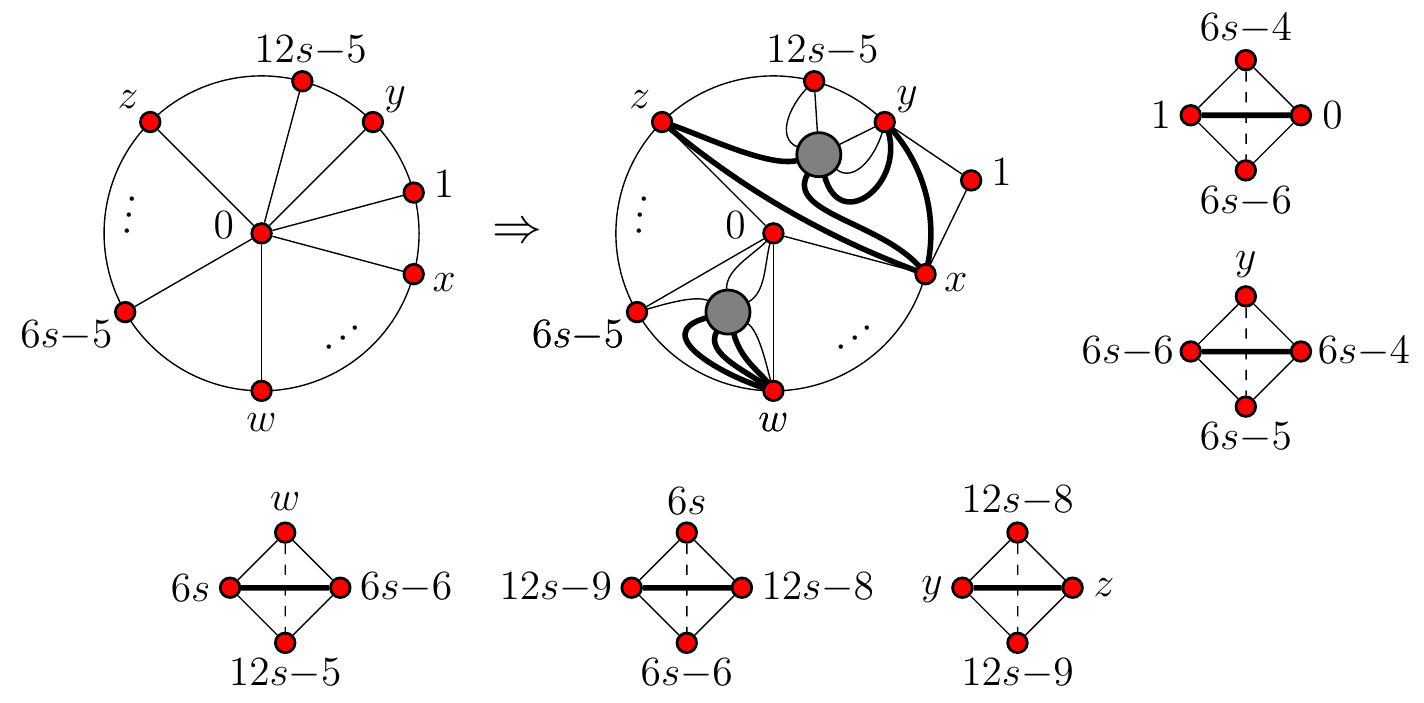}
    \caption{The embedding near vertex 0 is augmented using a handle, which is represented by excising two disks and identifying their boundaries. For the edge flips, the dashed edges are replaced with the thick solid edges. } 
    \label{fig-handle}
\end{figure}

Figure~\ref{fig-handle} illustrates how this partial information allows us to add the missing edges using edge flips and a handle. In particular, adding the edge $(x,y)$ at the cost of $(0,1)$ allows us to install a handle that merges three faces containing the four lettered vertices. We then add the missing edges near this handle or via sequences of edge flips.

The same procedure can be applied for $s=3$ using the current graph in Figure~\ref{fig-case3}, where there is a negligible difference in one of the edge flips. In particular, the log near $6$ is reversed from (\textasteriskcentered), so one of the quadrilaterals in Figure~\ref{fig-handle} is mirrored. The remaining cases $s = 1, 2$ are handled by the symmetric embeddings detailed in Appendix~\ref{sec-app}. 
\end{proof}

\begin{remark}
The arcs labeled $9$ and $6s{-}12$ in Figure~\ref{fig-current-gen}(a) extend the arithmetic sequence in Figure~\ref{fig-current-gen}(b), but unfortunately the rotations assigned to their endpoints must differ from the pattern in Figure~\ref{fig-current-gen}(b). Thus, generalizing this family of current graphs to the $s=3$ case is impossible---the corresponding graph actually does not have any one-face embedding. 
\end{remark}

A naive calculation on the number of nonisomorphic embeddings already sharpens the exponential bound in Korzhik~\cite{Korzhik-Index4} by approximately a factor of 2 in the base. It would be desirable to tighten this bound by developing techniques for distinguishing nearly-regular triangulations. 

\begin{corollary}
The number of nonisomorphic triangulations of $K_{12s}$ is at least $4^{s-o(s)}$, where $o(s)$ denotes a positive function which grows sublinearly in $s$.
\end{corollary}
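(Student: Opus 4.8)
\section*{Proof proposal for the Corollary}

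The plan is to bound the number of isomorphism classes from below by a ratio
$$\#\text{classes} \;\ge\; \frac{N}{M},$$
where $N$ is the number of distinct \emph{labeled} triangulations our construction produces and $M$ is the largest number of these that can lie in a single isomorphism class. I would produce $N \ge 4^{s-o(s)}$ labeled triangulations from independent choices along the ladder of Figure~\ref{subfig-gen-b}, and then show $M \le \mathrm{poly}(s)$. Since any polynomial in $s$ equals $4^{o(s)}$, the ratio is $4^{s-o(s)}$, as claimed. The whole argument is ``naive'' in that it optimizes neither the count $N$ nor the collapse factor $M$ beyond subexponential accuracy.

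First I would count the constructions. The ladder carries an arithmetic sequence of currents on $\Theta(s)$ rungs, and I would isolate on the order of $2s$ disjoint constant-size segments, each admitting at least two internal reroutings (for instance, transposing a consecutive pair of rungs, or toggling a local sign pattern) that leave the degree condition (C1), Kirchhoff's law (C4), and the vortex conditions (C5)--(C7) intact. The crucial and least routine point is to check that each such modification preserves the single-face condition (C2); granting this, the $\approx 2s$ independent binary choices yield at least $4^{s-o(s)}$ valid current graphs. Distinct current assignments give distinct rotations at vertex $0$, hence distinct labeled triangulations $T^-$ of $K_{12s}-K_4$, and applying the fixed handle-and-flip procedure of the theorem turns these into that many distinct labeled triangulations $T$ of $K_{12s}$.

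For the collapse factor I would first work with the triangulations $T^-$ of $K_{12s}-K_4$. Each is $\Z_{12s-4}$-symmetric by construction, since the rotation at $k$ is the rotation at $0$ shifted by $k$, and the four former-vortex vertices $w,x,y,z$ are exactly the vertices of degree $12s-4$, all others having degree $12s-1$. Any isomorphism between two family members therefore preserves this degree partition, fixing $\{w,x,y,z\}$ and $\Z_{12s-4}$ setwise, and by the standard rigidity of translation-symmetric rotation systems it restricts to an affine map $k \mapsto ak+b$ on $\Z_{12s-4}$. There are at most $(12s-4)\,\varphi(12s-4)\cdot 24\cdot 2 = O(s^2)$ such maps, so each class absorbs only $O(s^2)=4^{o(s)}$ labeled members, giving at least $4^{s-o(s)}$ nonisomorphic triangulations of $K_{12s}-K_4$. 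To transfer this to $K_{12s}$, I would argue that the handle is a localized modification applied at a combinatorially recognizable spot, so that an isomorphism of two $T$'s carries the handle region to the handle region and induces an isomorphism of the underlying $T^-$'s up to polynomially many identifications of that region; the polynomial factors are again absorbed into $4^{-o(s)}$.

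I expect the collapse bound to be the main obstacle, precisely because $|\mathrm{Aut}(K_{12s})|$ is enormous and one must rule out ``hidden'' isomorphisms among structurally distinct constructions. The two pressure points are the affine-rigidity lemma controlling $T^-$ and, more delicately, the intrinsic recognition of the handle gadget inside the complete-graph triangulation $T$. Both rest on the flag-rigidity furnished by Rule R* (Proposition~\ref{prop-ruler}): an isomorphism of triangulations is determined by the image of a single incident vertex-edge pair together with an orientation, which caps every automorphism group at $O(s^2)$ and keeps all collapse factors subexponential.
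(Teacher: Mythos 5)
Your overall shape---a ratio bound, with exponentially many labeled triangulations divided by a polynomial collapse factor---matches the paper, but your mechanism for generating the exponentially many triangulations has a genuine gap. You propose to perturb the \emph{currents} on the ladder (transposing consecutive rungs, toggling local signs), and you yourself flag the one-face condition (C2) as the unchecked point; in fact the trouble starts earlier. Swapping two consecutive rung currents forces the horizontal currents between them to change (by the common difference of the arithmetic sequence) in order to restore KCL (C4), and the altered current can then collide with a current already appearing elsewhere in the ladder, violating (C3); sign toggles likewise break KCL at both endpoints unless compensated, and any change of currents also disturbs the vortex excesses governed by (C6)--(C7) and the structure of the log near the vortices, i.e.\ property (\textasteriskcentered), on which the handle-and-flip step depends. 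The paper sidesteps all of this by never touching the currents: it keeps the current graph fixed and varies the \emph{rotations} at the ladder vertices, invoking Lemma 2 of Korzhik and Voss~\cite{KorzhikVoss}, which guarantees that an $m$-rung ladder admits at least $2^m$ distinct one-face embeddings whose logs all still satisfy (\textasteriskcentered). Each of these yields a distinct labeled triangulation of $K_{12s}$ after the unchanged handle procedure, giving $2^{2s-8}$ of them. That substitution---vary the embedding of the current graph, not its currents---is the idea your proposal is missing, and without it (or a full verification of your rerouting scheme, which appears false as stated) the count $N \ge 4^{s-o(s)}$ is not established.

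Your collapse bound, by contrast, contains the right fact but buries it under machinery that is both unnecessary and itself unjustified. The flag rigidity you state at the end---an isomorphism of triangulations is determined by the image of one directed edge together with an orientation bit---applied directly to the triangulations of $K_{12s}$ already caps every isomorphism class at $2(12s)(12s-1)$ members of the collection: distinct members isomorphic to a fixed $T_1$ require distinct isomorphisms out of $T_1$, and there are at most $2(12s)(12s-1)$ of those. That two-sentence argument is exactly the paper's. Your detour through $K_{12s}-K_4$, the ``affine rigidity'' claim $k \mapsto ak+b$ (not a standard fact: an isomorphism between two \emph{different} cyclically symmetric triangulations need not normalize the translation action, so this would need its own proof), and the ``combinatorial recognizability'' of the handle region are all unneeded, and the latter two claims are gaps in their own right.
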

\begin{proof}
A consequence of Lemma 2 of Korzhik and Voss~\cite{KorzhikVoss} shows that given a one-face embedding of a graph with an $m$-rung ladder, there are at least $2^m$ distinct one-face embeddings, all satisfying property (\textasteriskcentered) (where the subsequences may appear in a different order), that arise from changing the rotations at the vertices in the ladder. Any isomorphism between two triangulations of $K_n$ is uniquely determined by the image of a single edge, and whether the mapping is orientation-reversing. Thus, in a collection of distinct triangulations of $K_{12s}$, any given isomorphism type can only appear at most $2(12s)(12s-1)$ times, so there are at least
$$\frac{2^{2s-8}}{2(12s)(12s-1)} = 4^{s-o(s)}$$
nonisomorphic triangulations.
\end{proof}

\bibliographystyle{alpha}
\bibliography{../biblio}

\appendix

\section{Triangulations of $K_{12}$ and $K_{24}$ using cyclic groups}\label{sec-app}

For completeness, we list the embedding of $K_{12}$ described in Pengelley and Jungerman~\cite{Pengelley-Index4} and follow their approach to give an explicit triangulation of $K_{24}$ using an abelian group (see Figure~\ref{fig-k24}). The latter has not previously appeared in the literature. Both are index 4 solutions using the cyclic groups $\Z_{12}$ and $\Z_{24}$, respectively: to generate the rotation at vertex $k$, take the log $[k \bmod{4}]$ and add $k$ to each element. For more information on index 4 current graphs and the additional difficulties they incur, see Pengelley and Jungerman~\cite{Pengelley-Index4} or Korzhik~\cite{Korzhik-Index4}. 

\begin{figure}[!ht]
    \centering
    \includegraphics[width=\textwidth]{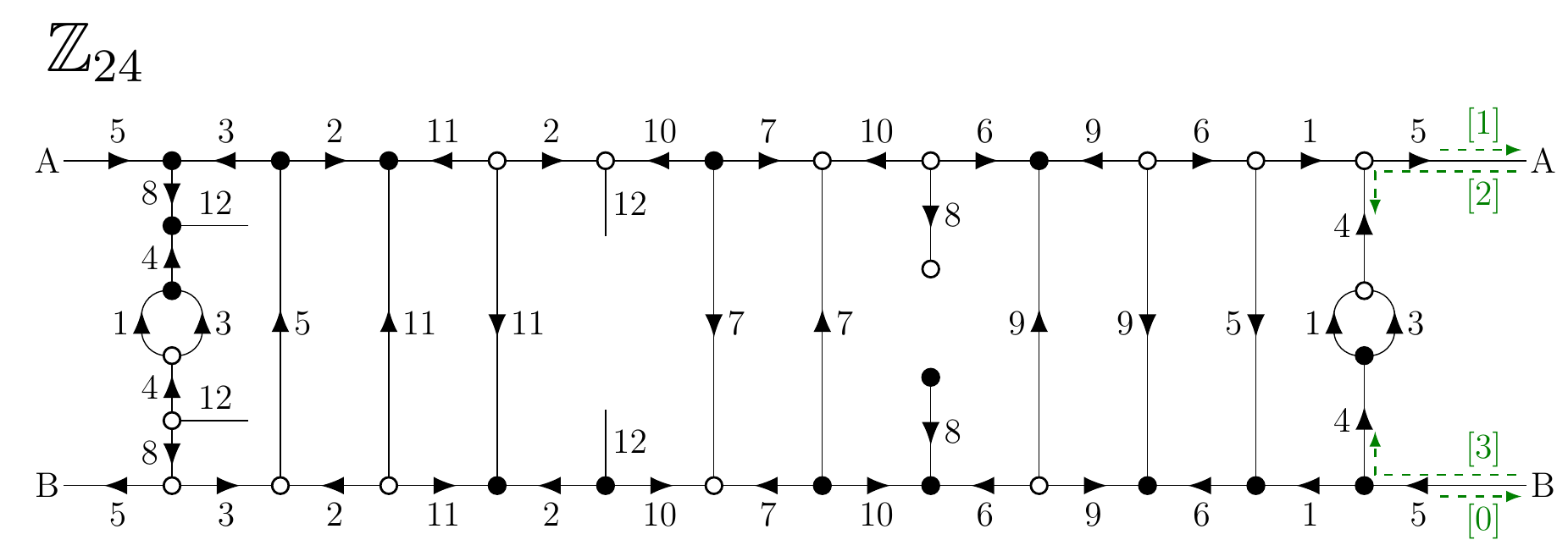}
    \caption{An index 4 current graph for $K_{24}$, produced from the ``two-table'' method of Pengelley and Jungerman~\cite{Pengelley-Index4}. The ends labeled ``A'' and ``B'' are identified.} 
    \label{fig-k24}
\end{figure}

\begin{table}[ht!]
\centering
{
$\begin{array}{rrrrrrrrrrrrrr}
\lbrack0\rbrack. & 11 & 8 & 9 & 1 & 4 & 3 & 6 & 2 & 7 & 5 & 10 \\
\lbrack1\rbrack. & 1 & 4 & 3 & 11 & 8 & 9 & 6 & 10 & 5 & 7 & 2 \\
\lbrack2\rbrack. & 11 & 1 & 2 & 7 & 5 & 10 & 4 & 8 & 6 & 9 & 3 \\
\lbrack3\rbrack. & 1 & 11 & 10 & 5 & 7 & 2 & 8 & 4 & 6 & 3 & 9
\end{array}$
}
\caption{The logs of Pengelley and Jungerman~\cite{Pengelley-Index4} for embedding $K_{12}$.}
\label{tab-k12}
\end{table}

\begin{table}[ht!]
\centering
{
$\arraycolsep=3.6pt\begin{array}{rrrrrrrrrrrrrrrrrrrrrrrrrrrrrrrrrrrrrrrrr}
\lbrack0\rbrack. & 19 & 16 & 4 & 1 & 21 & 20 & 12 & 8 & 3 & 5 & 2 & 13 & 11 & 22 & 10 & 17 & 7 & 14 & 6 & 15 & 9 & 18 & 23 \\
\lbrack1\rbrack. & 5 & 8 & 20 & 23 & 3 & 4 & 12 & 16 & 21 & 19 & 22 & 11 & 13 & 2 & 14 & 7 & 17 & 10 & 18 & 9 & 15 & 6 & 1 \\
\lbrack2\rbrack. & 19 & 20 & 21 & 1 & 4 & 23 & 5 & 6 & 15 & 9 & 18 & 8 & 16 & 10 & 17 & 7 & 14 & 12 & 2 & 13 & 11 & 22 & 3 \\
\lbrack3\rbrack. & 5 & 4 & 3 & 23 & 20 & 1 & 19 & 18 & 9 & 15 & 6 & 16 & 8 & 14 & 7 & 17 & 10 & 12 & 22 & 11 & 13 & 2 & 21
\end{array}$
}
\caption{Logs for generating a triangulation of $K_{24}$ using the group $\Z_{24}$.}
\label{tab-k24}
\end{table}

\end{document}